
\documentclass[reqno]{amsart}

\usepackage{hyperref}
\usepackage{breakurl}
\usepackage{amssymb}
\usepackage{amsthm}
\usepackage{mathtools}
\usepackage{enumitem}

\usepackage{xcolor}
\hypersetup{
    colorlinks,
    linkcolor={red!50!black},
    citecolor={blue!50!black},
    urlcolor={blue!80!black}
}





\usepackage{multirow}








\newcommand{\Real}{\operatorname{Re}}
\newcommand{\Imag}{\operatorname{Im}}

\newcommand{\ZZ}{\mathbb{Z}}
\newcommand{\QQ}{\mathbb{Q}}

\newcommand{\RR}{\mathbb{R}}
\newcommand{\CC}{\mathbb{C}}

\newcommand{\QQbar}{\overline{\mathbb{Q}}}

\newcommand{\be}{\begin{equation}}
\newcommand{\ee}{\end{equation}}


\newtheorem{theorem}{Theorem}
\newtheorem{lemma}[theorem]{Lemma}

\theoremstyle{definition}

\newtheorem{algorithm}[theorem]{Algorithm}

%


%

\makeatletter
\@namedef{subjclassname@2020}{%
  \textup{2020} Mathematics Subject Classification}
\makeatother

\begin{document}

\title{Rapid computation of special values of Dirichlet L-functions}

\author{Fredrik Johansson}
\address{Inria Bordeaux, 33400 Talence, France}
\email{fredrik.johansson@gmail.com}


\subjclass[2020]{Primary 33F05, 11M06; Secondary 33B20, 65D20}

\begin{abstract}
We consider computing the Riemann zeta function $\zeta(s)$
and Dirichlet $L$-functions $L(s,\chi)$
to $p$-bit accuracy for large $p$.
Using the approximate functional equation together
with asymptotically fast computation of the incomplete gamma function,
we observe that $p^{3/2+o(1)}$ bit complexity can be
achieved if $s$ is an algebraic number of fixed degree
and with algebraic height bounded by $O(p)$.
This is an improvement over the $p^{2+o(1)}$ complexity of previously published algorithms
and yields, among other things, $p^{3/2+o(1)}$ complexity algorithms for Stieltjes
constants and $n^{3/2+o(1)}$ complexity 
algorithms for computing the $n$th Bernoulli number or the $n$th Euler number exactly.
\end{abstract}

\maketitle


\section{Introduction}

Let $\chi$ be a Dirichlet character modulo $q \ge 1$.
The associated Dirichlet $L$-function is
the analytic continuation of
\be
L(s,\chi) = \sum_{n=1} \frac{\chi(n)}{n^s}, \quad \quad \Real(s) > 1
\label{eq:lseries}
\ee
to $s \in \CC$ with the possible exception of a pole at $s = 1$.
The Riemann zeta function $\zeta(s)$ is
the Dirichlet $L$-function corresponding to the trivial character $\chi(n) = 1$,
which is the unique character modulo $q = 1$.

If $\chi$ is a primitive character,
then the function $L(s,\chi)$ is represented in the entire complex plane
by a convergent expansion,
the \emph{approximate functional equation} \cite[Theorem~7.3]{Cohen2019}
\begin{equation}
\begin{aligned}
\Gamma\!\left(\frac{s+\delta}{2}\right) \!L(s,\chi) &= 
\delta_{q,1}\pi^{s/2}\!\left(\frac{\alpha^{(s-1)/2}}{s-1}-\frac{\alpha^{s/2}}{s}\right)
+ \sum_{n=1}^{\infty} \frac{\chi(n)}{n^s} \, \Gamma\!\left(\frac{s+\delta}{2}, \frac{\pi n^2 \alpha}{q}\right) \\
& + \omega \left(\frac{\pi}{q}\right)^{\!\!s-1/2} \sum_{n=1}^{\infty} \frac{\overline{\chi}(n)}{n^{1-s}} \, \Gamma\!\left(\frac{1-s+\delta}{2}, \frac{\pi n^2}{ \alpha q}\right)
\end{aligned}
\label{eq:fe}
\end{equation}
where $\delta \in \{0, 1\}$ is the parity $\chi(-1) = (-1)^\delta$,
$\omega$ is the \emph{root number} of $\chi$,
which satisfies $|\omega| \le 1$,
and $\Gamma(a,z) = \int_z^{\infty} t^{a-1} e^{-t} dt$ is
the incomplete gamma function.
The quantity $\alpha$ is a free positive parameter;
we may take $\alpha = 1$ to balance the rate of convergence
of both series.
If $\chi$ is not primitive, we can decompose $L(s,\chi)$
in terms of primitive functions.


The expansion \eqref{eq:fe} is useful for high-precision
computation of $L(s,\chi)$ due to the super-exponential
decay $\Gamma(a, C n^2) \approx \exp(-C n^2)$ of the incomplete gamma
functions. We need only $O(p^{1/2})$ terms
for a desired bit precision $p$,
which should be contrasted with
Euler-Maclaurin summation~\cite[\S 4.2]{belabas2021numerical} \cite{Johansson2014hurwitz}
and similar methods which require $O(p)$ terms.
(An equally important advantage of \eqref{eq:fe}
is that we only need $O(q^{1/2})$
terms as a function of the modulus $q$.)

The drawback of \eqref{eq:fe} is that we have
to compute the nonelementary incomplete gamma functions.
Our goal is to study this problem
with attention to the bit complexity when $p \to \infty$.
We use ``time'' and ``bit operations'' synonymously, and
recall that floating-point numbers with $p$-bit precision
can be multiplied in time $O(p \log p) = p^{1+o(1)}$~\cite{Harvey2021}.
Using Euler-Maclaurin summation, for example, it is easy to show that
we can compute $L(s,\chi)$ or any of its $s$-derivatives $L^{(j)}(s,\chi)$
to $p$-bit accuracy in time $p^{2+o(1)}$
for fixed $s$, $\chi$ and $j$.
Our main observation is the following improved complexity bound
for special values $s$.


\begin{theorem}
Let $\chi$ be a fixed Dirichlet character,
and let $s \in \overline{\QQ}$ be an algebraic number of fixed degree
such that the minimal polynomial of $s$ over $\mathbb{Z}$
has coefficients bounded in absolute value by $O(p)$.
Then, for any fixed $j \ge 0$, the value
$L^{(j)}(s,\chi)$
can be approximated with absolute error less than $2^{-p}$ in time $p^{3/2+o(1)}$ using $p^{1+o(1)}$ space.
(When $s = 1$ and this point is a pole, the corresponding Laurent series coefficient is computed.)
\label{thm:complexity}
\end{theorem}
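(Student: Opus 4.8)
\emph{Proof strategy.}
The plan is to evaluate the approximate functional equation \eqref{eq:fe} with $\alpha=1$ term by term, after two reductions: (i) reduce to primitive $\chi$, since a general $L(s,\chi)$ equals a primitive $L$-value times $\bigO(1)$ elementary Euler factors $(1-\chi^*(p)p^{-s})$; and (ii) for the derivative case, differentiate \eqref{eq:fe} in $s$ (see below). Since $\chi$ is fixed and $|s|=\bigO(p)$, Stirling's formula together with an elementary separation bound for $s$ from the integers (which uses precisely the hypothesis that the minimal polynomial of $s$ has $\bigO(\log p)$-bit coefficients) shows that every quantity occurring in \eqref{eq:fe} has magnitude $2^{\pm\bigO(p\log p)}$; hence it suffices to work throughout with a precision of $P:=p^{1+o(1)}$ bits. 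Bounding the incomplete gamma functions by $\Gamma(b,cn^2)\lesssim(cn^2)^{\Real(b)-1}e^{-cn^2}$ shows that truncating each of the two series after $N=\bigO(p^{1/2})$ terms (using that $q$ is fixed) gives absolute error below $2^{-p}$. The elementary prefactor, the root number $\omega$ (which is a fixed constant, independent of $s$), the numbers $\pi$ and $\Gamma((s\pm\delta)/2)$ and $1/\Gamma((s\pm\delta)/2)$, the logarithms $\log n$, and the powers $n^{-s}=\exp(-s\log n)$ and $n^{s-1}$ are each computable to precision $P$ in time $P^{1+o(1)}=p^{1+o(1)}$ by standard fast algorithms; summed over $n\le N$ this is $p^{3/2+o(1)}$. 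Everything therefore reduces to the following claim: for fixed $\chi$ and $a=(s+\delta)/2$ as in the statement, each value $\Gamma(a,\pi n^2/q)$ --- and, in the derivative case, $\partial_a^i\Gamma(a,\pi n^2/q)$ for $i\le j$ --- can be computed to precision $P$ in time and space $p^{1+o(1)}$; the theorem follows by accumulating $N=\bigO(p^{1/2})$ such terms in a running sum, which adds only $N\cdot p^{1+o(1)}=p^{3/2+o(1)}$ time and $p^{1+o(1)}$ space.

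To prove the claim I would use that $\gamma(a,z)=\int_0^z t^{a-1}e^{-t}\,dt=\Gamma(a)-\Gamma(a,z)$ satisfies the second-order linear ODE $z y''+(z-a+1)y'=0$ with polynomial coefficients, and compute $\gamma(a,z_n)$ for $z_n=\pi n^2/q$ by analytic continuation of the solution vector $(y,y')$ from a rational point $\zeta_n$ of poly-logarithmic height with $|\zeta_n-z_n|\le 1$ out to $z_n$; the data at $\zeta_n$ (the value $\gamma(a,\zeta_n)$ via its ${}_1F_1$-series, and $\gamma'(a,\zeta_n)=\zeta_n^{a-1}e^{-\zeta_n}$ elementarily) is obtained in time $\bigOtilde(P)$ by binary splitting over the number field $K:=\QQ(s)$. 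The obstacle --- and, I expect, the only genuine one --- is that $z_n$ is transcendental, so a direct Taylor expansion at $z_n$ would need a rational approximation with $\Theta(P)$-bit numerator and denominator, making binary splitting cost $\bigOtilde(P^2)$. This is exactly the situation the \emph{bit-burst} technique handles: write $z_n-\zeta_n=w_2+w_3+\dots+w_m$ with $m=\bigO(\log P)$, where $w_i$ has $\bigOtilde(2^i)$ bits and $|w_i|\le 2^{-\Omega(2^{i-1})}$, and pass from the expansion point $\zeta_n+w_2+\dots+w_{i-1}$ to the next one in step $i$ by summing a local Taylor series. Because $|w_i|$ is so small, $\bigOtilde(P/2^i)$ terms suffice for $P$-bit accuracy at step $i$; these terms obey a recurrence of fixed order whose coefficients are polynomials in the index with coefficients in $K$ of bit size $\bigOtilde(2^i)+\bigO(\log p)$ --- the first contribution from the exact, poly-logarithmic-denominator expansion point, the second from the minimal polynomial of $s$. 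Both hypotheses enter here: $[K:\QQ]$ being fixed makes binary splitting over $K$ only a constant factor more costly than over $\QQ$, and the minimal polynomial of $s$ having $\bigO(\log p)$-bit coefficients means the parameter $a$ needs no bit-burst of its own. Step $i$ thus produces a transition matrix with exact entries of bit size $\bigOtilde\bigl((P/2^i)\cdot 2^i\bigr)=\bigOtilde(P)$ in time $\bigOtilde(P)$; applying it and re-rounding to $P$ bits costs $\bigOtilde(P)$ more; over the $\bigO(\log P)$ steps this is $\bigOtilde(P)=p^{1+o(1)}$ per incomplete gamma value, in $p^{1+o(1)}$ space. For the $n$ with $z_n$ large the target precision falls well below $P$, which absorbs the cancellation in the identity $\Gamma(a)=\gamma(a,z_n)+\Gamma(a,z_n)$.

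Finally, the derivative and pole cases need only routine changes. For $L^{(j)}(s,\chi)$ one differentiates \eqref{eq:fe}: the derivatives $\partial_s^k n^{-s}=(-\log n)^k n^{-s}$ and the polygamma-type derivatives of $\Gamma((s+\delta)/2)$ are again $p^{1+o(1)}$, while $\partial_a^i\Gamma(a,z_n)$ for $i\le j$ is obtained by propagating, in place of $(y,y')$, the vector $(\partial_a^iy,\partial_a^iy')_{i\le j}$, which satisfies a triangular linear system of the fixed dimension $2(j+1)$ with the same type of coefficients (its initial data at $\zeta_n$ is likewise computed by binary splitting over $K$), so the bit-burst analysis is unchanged up to that constant. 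One then recovers $L^{(j)}(s,\chi)$ from the computed derivatives of $\Gamma((s+\delta)/2)L(s,\chi)$ and of the entire function $1/\Gamma((s+\delta)/2)$ via the Leibniz rule, which sidesteps any division by zero at the trivial zeros; and when $q=1$ and $s=1$ the pole is carried explicitly by the term $\pi^{s/2}\bigl(\alpha^{(s-1)/2}/(s-1)-\alpha^{s/2}/s\bigr)$, so subtracting it leaves a function analytic at $s=1$ whose Taylor coefficients, divided by the regular nonzero expansion of $\Gamma(s/2)$, give the required Laurent data. Altogether the cost is $N\cdot p^{1+o(1)}=p^{3/2+o(1)}$ time and $p^{1+o(1)}$ space, as claimed; the delicate step, as noted, is the bit-burst evaluation of the incomplete gamma function with an algebraic parameter of large height, where one must simultaneously control the super-exponentially varying magnitudes, the growth of the exact binary-splitting data, and the arithmetic in the fixed number field.
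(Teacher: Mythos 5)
Your proof is correct and follows essentially the same route as the paper: truncate the approximate functional equation to $p^{1/2+o(1)}$ terms and compute each incomplete gamma value by bit-burst analytic continuation of the (lower) incomplete gamma ODE, exploiting the $O(\log p)$-bit minimal polynomial so that all binary-splitting arithmetic takes place over the fixed number field $\QQ(s)$. You spell out a few points the paper delegates to references or to its algorithm section (notably the Liouville-type separation of $s$ from nearby integers needed for the $\exp(O(p\log p))$ magnitude bounds, and the explicit bit-size accounting across the bit-burst steps); the only small slip is writing $N=O(p^{1/2})$ for the truncation length, which should be $p^{1/2+o(1)}$ to absorb those $\exp(O(p\log p))$ prefactors, but this does not change the claimed complexity.
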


\begin{proof}
The height condition implies that $|s| = O(p)$.
Since any terms and prefactors appearing in \eqref{eq:fe}
and in the asymptotics of the incomplete gamma function are
bounded by $\exp(|s|^{1+o(1)})$,
it is sufficient to truncate both infinite series to $N = p^{1/2+o(1)}$
terms and approximate the terms to $p^{1+o(1)}$ bits.

The function $y(z) = \Gamma(a, z)$ is holonomic,
satisfying a linear differential equation
$A y = 0$ with $A \in \mathbb{Z}[a,z,\tfrac{d}{dz}]$.
We evaluate $y$ (for two values of the
parameter $a$) at $N$ points $z$.
We can compute each such function value
in $p^{1+o(1)}$ bit operations and $p^{1+o(1)}$ space
using the \emph{bit-burst algorithm}~\cite{david1990computer},
employing arithmetic in the number field $\QQ(s)$.
This bound holds uniformly for the required values of $a$ and $z$,
by the same argument as in \cite[Corollary 1]{mezzarobba2012note},
using the facts that $|z| \le p^{1+o(1)}$
and that $A$ as well as the defining polynomial of $\mathbb{Q}(s)$
have fixed degree and coefficients of bit size $O(\log p)$.

Using standard methods,
the remaining operations (evaluation of Dirichlet characters,
the gamma function, and elementary functions) fall within the
same complexity bounds.

For derivatives $L^{(j)}(s,\chi)$, and at removable singularities,
the equivalent operations
can be carried out using arithmetic on truncated formal power series.
\end{proof}

We will provide additional details below.
The only interesting point in the proof of Theorem~\ref{thm:complexity}
is the use of the bit-burst algorithm
instead of naive summation, which allows us to compute
the function $\Gamma(a,z)$ in quasilinear rather
than quadratic time.
The bit-burst algorithm for holonomic functions
has been known since the 1980s~\cite{david1990computer,vdH:hol,van2001fast,Mezzarobba2011,mezzarobba2012note},
and since the 1970s in special cases~\cite{brent1976complexity},
yet we are not aware of a correct complexity bound
of this kind in the literature for Dirichlet $L$-functions
or even for the special case of the Riemann zeta function.

The use of the approximate functional equation for $L$-function
computation
has been studied in detail
by Rubinstein~\cite{rubinstein1998evidence}
and several other authors~\cite{dokchitser2004computing,booker2006artin,molin2010integration,belabas2021numerical}.
These works do not mention the bit-burst algorithm or address
the bit complexity for large $p$, instead focusing
on parameters relevant for numerical testing of the generalized Riemann hypothesis,
namely with fixed $p$ and with large $|\Imag(s)|$ and/or large $q$.
For large $|\Imag(s)|$, one must use a ``smoothed'' version of \eqref{eq:fe}
to avoid exponentially large cancellation, or Riemann-Siegel
type expansions; the method in Theorem~\ref{thm:complexity}
is not competitive in this setting, where the best methods
achieve $O(|\Imag(s)|^{1/2})$ or lower complexity for a fixed level of accuracy.

Borwein, Bradley and Crandall \cite{BorweinBradleyCrandall2000}
and Crandall~\cite{crandall2012unified,bailey2015crandall} discuss
the approximate functional equation in the context of high-precision
zeta function computation,
but do not mention the bit-burst algorithm or give a complexity
bound of this type. Crandall~\cite{crandall2012unified} writes that the incomplete gamma function
can be computed using $p^{1+o(1)}$ ``operations'', but this
is referring to full-precision arithmetic operations,
which would give us $p^{2+o(1)}$ bit complexity for $\Gamma(a,z)$
and $p^{5/2+o(1)}$ bit complexity for $L$-functions.
A refined algorithm that achieves $p^{2+o(1)}$ bit complexity is described in \cite[\S 7]{BorweinBradleyCrandall2000};
see \S \ref{sect:transcendental} below.

In a 1988 paper, Borwein and Borwein~\cite{Borwein1988} claim
that $\zeta(s)$ can be computed in time $p^{1+o(1)}$
if $s$ is a fixed rational number, and in time $p^{3/2+o(1)}$
if $s$ is a fixed generic (computable) complex number.
No explicit algorithm is given to justify these claims: the authors
simply write ``we truncate both the integral and the sum''
with reference to the formula
\be
\zeta(s) \Gamma\left(\frac{s}{2}\right) \pi^{-s/2} - \frac{1}{s(s-1)} = \int_{1}^{\infty} \frac{t^{(1-s)/2} + t^{s/2}}{t} \sum_{n=1}^{\infty} e^{-n^2 \pi t} \, dt
\label{eq:fetheta}
\ee
which, apart from minor differences in notation, is the approximate functional equation for
$\zeta(s)$ in the form originally derived by Riemann~\cite{riemann1859ueber}
(we obtain the series in incomplete gamma functions by integrating term by term).

Both bounds claimed by the Borweins are a factor $p^{1/2}$ better than all methods
known to this author.
Lacking evidence to the contrary, we believe that the Borweins had in mind some
combination of the
algorithms that will be described below
and that their complexity analysis was erroneous.
Our goal with this article is therefore in part to correct the record.\footnote{We mention that J.~Borwein
coauthored the 2000 survey paper~\cite{BorweinBradleyCrandall2000}
on $\zeta(s)$ computation, which discusses the
approximate functional equation prominently but does not mention the
claims from 1988. This omission suggests that the Borweins were aware
of the error (but perhaps did not consider it
important enough to publish a correction).
The other complexity results in~\cite{Borwein1988},
for instance concerning $\Gamma(s)$, are correct.}

The rest of this paper is structured as follows:
\S \ref{sect:applications} discusses some consequences
of Theorem~\ref{thm:complexity}, \S \ref{sect:algorithm} gives a more
detailed description of the algorithm,
and \S \ref{sect:implementation} reports
implementation results.
Finally, \S \ref{sect:transcendental} discusses alternative
algorithms for use when $s$ is not algebraic.



\section{Applications and generalizations}

\label{sect:applications}

The immediate application of Theorem~\ref{thm:complexity} is
that $p^{3/2+o(1)}$ can be a significant improvement
over $p^{2+o(1)}$ for numerical evaluation
to tens of thousands of digits.
Such computations are not exclusively done to test algorithms; for
example, integer relation searches
employing 50,000-digit precision have been
successful in discovering new identities
involving special values of $L$-functions~\cite{Bailey2001}.

\subsection{Values at integers}

The most famous special values, $\zeta(n)$ and $L(n,\chi)$ with $n \in \ZZ$,
can be expressed in terms of
logarithmic derivatives
of the gamma function at rational points,
and can consequently be computed in
quasilinear time $p^{1+o(1)}$
using binary splitting~\cite{Karatsuba1998,johansson2021arbitrary}.

Alternatively (and often more efficiently),
binary splitting can be applied
directly to convergence-accelerated series for the Riemann zeta function~\cite[\S 4.7]{Johansson2014thesis}
or hypergeometric series for particular values such as
\be
\zeta(3) = \frac{5}{2} \sum_{n=1}^{\infty} (-1)^{n+1} \frac{(n!)^2}{n^3 (2n)!}
\ee
which have been used to compute billions of digits \cite{sebah2003zeta,Yee2021}.

However, these quasilinearity results all assume that $n = p^{o(1)}$.
For example, if $n$ and $p$ are proportional (or proportional up to logarithmic factors),
then the complexity degenerates to $p^{2+o(1)}$ or worse.
The complexity is also softly quadratic
with $n \propto p$ if we compute $L(s,\chi)$ directly using the $L$-series \eqref{eq:lseries} or the corresponding
Euler product.
The $p^{3/2+o(1)}$
complexity of Theorem~\ref{thm:complexity} is then an improvement over previous algorithms.

\subsection{Bernoulli and Euler numbers}

The Bernoulli numbers and Euler numbers are the rational
numbers and integers respectively defined by
\be
\frac{x}{e^x-1} = \sum_{n=0}^{\infty} \frac{B_n}{n!} x^n, \quad
\frac{1}{\cosh(x)} = \sum_{n=0}^{\infty} \frac{E_n}{n!} x^n.
\ee
The odd-index values are trivial, while
the even-index values can be expressed in terms of Dirichlet $L$-functions
as
\begin{equation}
B_{2n} = (-1)^{n+1} \frac{2 (2n)!}{(2\pi)^{2n}} \zeta(2n), \quad E_{2n} = (-1)^n \frac{4^{n+1} (2n)!}{\pi^{2n+1}} \beta(2n+1)
\end{equation}
where $\beta(s) = L(s,\chi_{4.3})$ is
the Dirichlet beta function,
corresponding to the character modulo $q = 4$ with $(\chi_{4.3}(n))_{n=0}^{\infty} = (0, 1, 0, -1, \ldots)$.

There are $\Theta(n \log n)$ bits in $E_n$ and in the numerator of $B_n$ (the denominator is easy to determine),
so we can recover the exact values by evaluating the $L$-functions
numerically to $p = n^{1+o(1)}$ bits.
As a corollary of Theorem~\ref{thm:complexity}, we have the following:

\begin{theorem}
The $n$th Bernoulli number $B_n$ and Euler number $E_n$ can be computed exactly in time $n^{3/2+o(1)}$ using $n^{1+o(1)}$ space.
\end{theorem}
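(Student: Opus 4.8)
The plan is to reduce the exact computation of $B_n$ and $E_n$ to a bounded number of numerical evaluations of $\zeta$ and $\beta$ at integer arguments, and then invoke Theorem~\ref{thm:complexity} with $s = 2n$ (respectively $s = 2n+1$) and $\chi$ the trivial character (respectively $\chi_{4.3}$). First I would observe that the odd-index values $B_{2k+1}$ (for $k \geq 1$) and $E_{2k+1}$ are identically zero, so it suffices to treat the even-index case $B_{2n}$ and the odd-index case $E_{2n+1}$; relabelling, we must compute $B_m$ for even $m$ and $E_m$ for even $m$. The argument $s = m$ is an integer with $|s| = m = O(p)$ once we set $p = m^{1+o(1)}$, and integers trivially satisfy the algebraic hypotheses of Theorem~\ref{thm:complexity} (degree $1$, minimal polynomial $x - m$ with coefficient bounded by $m = O(p)$). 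Thus $\zeta(2n)$ and $\beta(2n+1)$ can each be approximated to $2^{-p}$ in time $p^{3/2+o(1)} = n^{3/2+o(1)}$ using $n^{1+o(1)}$ space.

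Next I would handle the conversion from a high-precision approximation to the exact rational/integer value. The known bounds give $\log |E_n| = \Theta(n \log n)$ and, for the numerator of $B_n$ in lowest terms, the same order; the denominator of $B_n$ is given explicitly by the von Staudt--Clausen theorem as $\prod_{(p-1)\mid n} p$, which can be computed in time $n^{1+o(1)}$ by sieving. So it suffices to choose $p = C n \log n$ for a suitable constant $C$ and compute the relevant $L$-value, then multiply by the elementary prefactor $(-1)^{n+1} 2(2n)!/(2\pi)^{2n}$ (respectively $(-1)^n 4^{n+1}(2n)!/\pi^{2n+1}$) to $p$-bit accuracy. The factorial $(2n)!$ and the power $\pi^{2n}$ are computable in time $n^{1+o(1)}$ by binary splitting, well within budget. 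The resulting real approximation to $B_{2n}$ (resp. $E_{2n}$) is within $2^{-p/2}$, say, of an exact value whose denominator is known, so rounding the product denominator $\times$ approximation to the nearest integer recovers the numerator exactly, and hence $B_{2n}$ exactly; for $E_{2n}$ the value is already an integer and we simply round. All of this rounding and exact arithmetic costs $n^{1+o(1)}$.

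The only mildly delicate point is justifying that $p = O(n \log n)$ bits of accuracy genuinely suffice, i.e. controlling the size of the $L$-value and the prefactor simultaneously so that the product is known to more bits than its integer (or known-denominator) part has. This follows from the crude bounds $1 \le \zeta(2n) \le \zeta(2) = \pi^2/6$ for $n \ge 1$ and $|\beta(2n+1)| \le 1$, combined with Stirling's formula for $(2n)!$: the prefactor has bit size $\Theta(n \log n)$, and an error of $2^{-p}$ in the $L$-value contributes an error of at most $2^{-p} \cdot (\text{prefactor}) = 2^{-p + \Theta(n\log n)}$ in the product, so choosing $p$ a large enough constant times $n \log n$ makes this smaller than $1/2$ after clearing the denominator. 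This bookkeeping is routine; no real obstacle arises. Putting the pieces together yields the claimed $n^{3/2+o(1)}$ time and $n^{1+o(1)}$ space bounds.
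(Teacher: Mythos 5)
Your proof is correct and follows essentially the same route as the paper: express $B_{2n}$ and $E_{2n}$ via $\zeta(2n)$ and $\beta(2n+1)$, observe that $s = 2n$ (resp.\ $2n+1$) is an integer of size $O(p)$ once $p = \Theta(n\log n)$ so that Theorem~\ref{thm:complexity} gives $p^{3/2+o(1)} = n^{3/2+o(1)}$ time and $n^{1+o(1)}$ space, and recover the exact value by rounding since the prefactor and the denominator (via von Staudt--Clausen) are cheap to compute. The paper treats this as an immediate corollary and leaves the bookkeeping implicit, whereas you have spelled out the error-propagation and the von Staudt--Clausen denominator computation; the content is the same.
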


The significance of this result is that all methods known until quite recently
(for instance those employing the Euler product)
require at least $n^{2+o(1)}$ time.

Harvey~\cite{Harvey2014} gave the first
subquadratic algorithm for computing $B_n$,
which uses $n^{4/3+o(1)}$ time and space
or $n^{3/2+o(1)}$ time when confined to $n^{1+o(1)}$ space.
We fail to improve on Harvey's bound,
but the methods are independent: ours is numerical;
Harvey's uses modular arithmetic and does not involve $L$-functions.
For Euler numbers, no subquadratic algorithm has been published before ours,
though it is plausible that Harvey's algorithm can be generalized to this case.

\subsection{Sparse zeta-expansions}

Many slowly converging series or products can
be evaluated to high precision using \emph{zeta function acceleration}~\cite{flajolet1996zeta},
which is based on the formal rearrangement
\be
\sum_{n \in A} f(1/n) = \sum_m f_m \zeta_A(m), \quad f(z) = \sum_m f_m z^m, \; \zeta_A(s) = \sum_{n \in A} n^{-s}.
\ee
The complexity of approximating such a sum to $p$-bit accuracy
is typically $p^{2+o(1)}$ provided that the transformed series
converges geometrically
and that $O(p)$ coefficients $f_m$ and zeta values $\zeta_A(m)$
can be evaluated simultaneously to $p$-bit accuracy in time $p^{2+o(1)}$ using
FFT-based power series operations (this is the case if $f$ is elementary
and $\zeta_A(s) = \zeta(s)$, for example).

If the resulting zeta-expansion is sparse, then Theorem~\ref{thm:complexity} may yield
an improved complexity bound. An example is the
Landau-Ramanujan constant
\be
\lambda = \left(\frac{1}{2} \prod_{p \equiv 3 \bmod 4} \frac{1}{1-p^{-2}} \right)^{1/2} \approx 0.764
\ee
which appears in the asymptotic formula $\lambda x / \sqrt{\log(x)}$ for the number of integers $k \le x$
expressible as a sum of two squares.
Flajolet and Vardi~\cite{flajolet1996zeta} obtain the sparse zeta-type expansion
\be
\lambda = \frac{1}{\sqrt{2}} \prod_{n=1}^{\infty} \left[ \left(1 - \frac{1}{2^{2^n}}\right) \frac{\zeta(2^n)}{\beta(2^n)} \right]^{1/2^{n+1}}
\ee
which requires only $O(\log(p))$ terms for $p$-bit accuracy.
It follows from Theorem~\ref{thm:complexity} that we can compute $\lambda$ to $p$-bit accuracy in time $p^{3/2+o(1)}$.


\subsection{Stieltjes constants}


The Stieltjes constants $\gamma_k$ are, up to a scaling factor,
the coefficients in the Laurent series of $\zeta(s)$ at $s = 1$.
Theorem~\ref{thm:complexity} states that we can compute any
Stieltjes constant to $p$-bit accuracy in time $p^{3/2+o(1)}$,
which again is superior to the $p^{2+o(1)}$ complexity
of classical methods like Euler-Maclaurin summation~\cite{liang1972stieltjes,Johansson2014hurwitz}
as well as methods based on numerical integration~\cite{JohanssonBlagouchine2018stieltjes}.
Algorithms with $p^{1+o(1)}$ complexity are only known
for Euler's constant $\gamma = \gamma_0$~\cite{BrentMcMillan1980}.

The idea of using \eqref{eq:fe} to compute Stieltjes constants is of course not new.
Coffey \cite[Proposition 9]{coffey2014series} gives the
explicit formula
\be
\gamma = \log(4 \pi) - 2 + \frac{2}{\sqrt{\pi}} \sum_{n=1}^{\infty} \frac{1}{n} \Gamma(\tfrac{1}{2}, \pi n^2) + 2 \sum_{n=1}^{\infty} \Gamma(0, \pi n^2)
\ee
along with a much more complex formula for $\gamma_1$
written in terms of series of ${}_2F_2$ and ${}_3F_3$
hypergeometric functions and digamma functions.
Coffey notes that these formulas ``may have some attraction for computation''
due to the $e^{-\pi n^2}$ type decrease of the terms.
He also notes that the method can be generalized
to Dirichlet $L$-function analogs of Stieltjes constants.
As indicated in the proof of Theorem~\ref{thm:complexity},
implementing~\eqref{eq:fe} with
power series arithmetic provides such
a generalization without requiring the derivation
of unwieldy formulas for the higher derivatives.

Similarly, we obtain a $p^{3/2+o(1)}$ complexity algorithm
for the Glaisher–Kinkelin constant $e^{1/2-\zeta'(-1)}$,
Keiper-Li coefficients, etc.

The approximate functional equation has been used
in a somewhat different way by Keiper~\cite{Keiper1992power} 
to compute Stieltjes
constants and other series coefficients related
to the Riemann zeta function, with worse
complexity than Theorem~\ref{thm:complexity}; we revisit this topic
in \S \ref{sect:transcendental}.

\subsection{Values at rational points}


Theorem~\ref{thm:complexity} implies $p^{3/2+o(1)}$ complexity for computing
the values $L(s, \chi)$ with $s \in \QQ$.
These constants have various applications;
$\zeta(n+1/2)$ and $\beta(n+1/2)$ appear in thermodynamics (Bose-Einstein statistics)
and in connection
with lattice sums describing the electrostatic potentials in crystals (Madelung constants) \cite[\S 1.10]{finch2003mathematical}.
It is a famous open problem whether $L(1/2, \chi) \ne 0$ for all primitive characters $\chi$~\cite[\S 7.6]{platt2011computing}.

The number $\zeta(1/2) \approx -1.4603545$ makes an interesting appearance
in a formula in Ramanujan's lost notebook (see \cite[\S 8.3]{Andrews2013}, where
generalizations to other values of $\zeta(s)$ and $L(s,\chi)$ with $s \in \QQ$ are discussed as well).
For any $\alpha, \beta > 0$ such that $\alpha \beta = 4 \pi^3$,
\begin{equation}
\sum_{n=1}^{\infty} \frac{1}{e^{n^2 \alpha} - 1} = \frac{\pi^2}{6\alpha} + \frac{1}{4} + \frac{\sqrt{\beta}}{4 \pi} \left[ \zeta(\tfrac{1}{2})
+ \sum_{n=1}^{\infty} \frac{\cos(\sqrt{n \beta}) - \sin(\sqrt{n \beta})  - e^{-\sqrt{n \beta}}}{\sqrt{n} (\cosh(\sqrt{n \beta}) - \cos(\sqrt{n \beta}) } \right].
\end{equation}

There is a parallel to the free parameter in \eqref{eq:fe}: by varying $\alpha$, we can force
either the left or the right series to converge faster at the expense of the other.
Setting $\alpha = O(1/p)$ in Ramanujan's formula minimizes the total number of terms
for a given precision $p$;
this leads to an algorithm with $p^{2+o(1)}$ bit complexity to compute $\zeta(1/2)$,
comparable to Euler-Maclaurin summation and inferior to Theorem~\ref{thm:complexity}.

\subsection{Hurwitz zeta-type functions}



The method behind Theorem~\ref{thm:complexity} is not restricted to ``proper'' $L$-functions.
Crandall \cite{crandall2012unified,bailey2015crandall} has
given a formula analogous to \eqref{eq:fe} (Crandall calls this
a ``Riemann-splitting representation'')
for the Lerch
transcendent, which is the analytic continuation of the series
\be
\Phi(z,s,a) = \sum_{n=0}^{\infty} \frac{z^n}{(n+a)^s}, \quad |z| < 1.
\ee

Combining Crandall's expansion with bit-burst
evaluation of the incomplete gamma function should
lead to $p^{3/2+o(1)}$ algorithms for the following:
\begin{itemize}
\item The Lerch transcendent $\Phi(z,s,a)$ with $s \in \QQbar$, $z, a \in \CC$ and its $s$-derivatives.
\item The Hurwitz zeta function $\zeta(s,a)$ with $s \in \QQbar, a \in \CC$ and its $s$-derivatives.
\item The generalized Stieltjes constants $\gamma_n(a)$ with $a \in \CC$.
\item The polylogarithm $\operatorname{Li}_s(z)$ with $s \in \QQbar, z \in \CC$ and its $s$-derivatives.
\item The Barnes $G$-function $G(z)$ with $z \in \CC$.
\end{itemize}

We have not checked the details of these computations
(validity of analytic continuations, possible exceptional points,
explicit error bounds, uniform complexity with respect to parameters),
and we leave this for a future study.

As in the case of integer zeta values,
$p^{1+o(1)}$ algorithms are already available
for the above functions in some more restricted cases,
e.g.\ for $\zeta(n,a)$ with $a \in \QQ$.

\section{The algorithm}

\label{sect:algorithm}

Since the proof of Theorem~\ref{thm:complexity} above is quite terse
and the bit-burst algorithm 
for generic holonomic functions
requires much more complicated machinery
than in the specialized case of computing $\Gamma(a,z)$,
we give a more explicit description here.

We may rely on ball arithmetic~\cite{vdH:ball,Joh2017},
which means that explicit error bounds need to be derived
only for the truncation errors in infinite series;
asymptotic estimates suffice for choosing the floating-point precision.

\subsection{The outer series}

To bound the tails of the infinite series in \eqref{eq:fe},
the following formulas may be used.
Similar bounds can also be found in~\cite{rubinstein1998evidence}.

\begin{lemma}
For real $z > 0$ and complex $a = \sigma + \tau i$,
the order $j \ge 0$ parameter derivative of the incomplete gamma function $\Gamma(a,z)$ satisfies the bound
\be
\begin{aligned}
\label{eq:incgambound}
\left|\Gamma^{(j,0)}(a,z)\right|
&= \left|z^{a-1} \log^j(z) e^{-z} \int_0^{\infty} e^{-t} \left(1+\frac{t}{z}\right)^{a-1} \left(1+\frac{\log(1+t/z)}{\log(z)} \right)^j dt \right| \\
&\le z^{\sigma-1} \log^j(z) e^{-z} \int_0^{\infty} \exp\left(-t + \frac{(\sigma-1) t}{z} + \frac{j t}{z \log(z)}\right) dt \\
&\le \frac{z^{\sigma-1} \log^j(z) e^{-z}}{1-B_j(\sigma,z)}, \quad  B_j(\sigma,z) = \frac{1}{z}\left(\max(\sigma-1, 0) + \frac{j}{\log(z)}\right) \\
\end{aligned}
\ee
provided that $B_j(\sigma,z) < 1$, and assuming that $z > 1$ if $j \ge 1$.
\end{lemma}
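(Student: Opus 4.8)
The plan is to reduce the estimate to the elementary identity $\int_0^{\infty}e^{-ct}\,dt = 1/c$ through three reductions. First I would establish the integral representation in the first line of \eqref{eq:incgambound}. Starting from $\Gamma(a,z) = \int_z^{\infty}t^{a-1}e^{-t}\,dt$, the substitution $t\mapsto z+t$ gives $\Gamma(a,z) = e^{-z}\int_0^{\infty}(z+t)^{a-1}e^{-t}\,dt$; since the $a$-dependent factor is $(z+t)^{a-1} = e^{(a-1)\log(z+t)}$, differentiating $j$ times in $a$ under the integral yields $\Gamma^{(j,0)}(a,z) = e^{-z}\int_0^{\infty}(z+t)^{a-1}\log^{j}(z+t)\,e^{-t}\,dt$. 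Now $(z+t)^{a-1} = z^{a-1}(1+t/z)^{a-1}$ and, using $\log(z+t) = \log(z) + \log(1+t/z)$ (valid for $z,t>0$), one has $\log^{j}(z+t) = \log^{j}(z)\bigl(1+\log(1+t/z)/\log(z)\bigr)^{j}$; this factorization, in which the hypothesis $z>1$ for $j\ge 1$ enters so that $\log(z)>0$, produces exactly the first line. The interchange of $\partial_a^{j}$ with the integral is justified by local-uniform domination: on any compact set of $a$-values the integrand and its first $j$ parameter derivatives are bounded by a fixed $C(1+t)^{M}e^{-t}$, which is integrable.

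Next I would move absolute values inside the integral. For real $z>0$ we have $|z^{a-1}| = z^{\sigma-1}$ and $|e^{-z}| = e^{-z}$, while $\log^{j}(z)$ is a nonnegative real (and equals $1$ when $j=0$); inside the integral $|(1+t/z)^{a-1}| = (1+t/z)^{\sigma-1}$ because $1+t/z>0$, and $1+\log(1+t/z)/\log(z) \ge 1 > 0$ for $t\ge 0$ since $\log(z)>0$, so that factor coincides with its own modulus. This gives $|\Gamma^{(j,0)}(a,z)| \le z^{\sigma-1}\log^{j}(z)e^{-z}\int_0^{\infty}(1+t/z)^{\sigma-1}\bigl(1+\log(1+t/z)/\log(z)\bigr)^{j}e^{-t}\,dt$.

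Finally I would bound the two remaining factors by exponentials using the elementary inequalities $\log(1+x)\le x$ and $1+y\le e^{y}$ for $x,y\ge 0$. For the first factor, $(1+t/z)^{\sigma-1} = e^{(\sigma-1)\log(1+t/z)} \le e^{\max(\sigma-1,0)\,t/z}$, splitting on the sign of $\sigma-1$ (for $\sigma\ge 1$ apply $\log(1+t/z)\le t/z$; for $\sigma<1$ the exponent is already $\le 0$); this reproduces the $(\sigma-1)t/z$ of the second line when $\sigma\ge 1$. For the second factor, $\bigl(1+\log(1+t/z)/\log(z)\bigr)^{j} \le e^{j\log(1+t/z)/\log(z)} \le e^{jt/(z\log(z))}$. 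Collecting the exponents, the integrand is at most $\exp\bigl(-t(1-B_j(\sigma,z))\bigr)$ with $B_j$ as in the statement, and since the hypothesis $B_j(\sigma,z)<1$ makes this exponent genuinely decaying, $\int_0^{\infty}e^{-(1-B_j)t}\,dt = (1-B_j)^{-1}$, which is the third line.

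The calculus estimates and the closed-form geometric integral are routine; the step deserving the most care is the justification of differentiation under the integral sign, together with the bookkeeping of exactly where the two hypotheses are used — namely $z>1$ (for $j\ge 1$) for the logarithm factorization and the positivity of the second factor, and $B_j(\sigma,z)<1$ for convergence of the final bounding integral.
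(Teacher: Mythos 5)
Your proof is correct and follows the natural chain of reductions that the displayed inequalities suggest; the paper gives no explicit proof, so there is nothing to contrast it against in detail. The integral representation via $t\mapsto z+t$, differentiation under the integral sign with a local domination, the factorizations $(z+t)^{a-1}=z^{a-1}(1+t/z)^{a-1}$ and $\log(z+t)=\log(z)(1+\log(1+t/z)/\log(z))$, the absolute-value pass-through, and the final elementary bounds $\log(1+x)\le x$, $1+y\le e^y$ are exactly the intended route.

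One subtle point you handled more carefully than the statement itself: the paper's middle line writes the exponent $(\sigma-1)t/z$, but for $\sigma<1$ the inequality $(1+t/z)^{\sigma-1}\le e^{(\sigma-1)t/z}$ is actually false (taking $\sigma-1<0$ reverses $\log(1+x)\le x$). Your version, $(1+t/z)^{\sigma-1}\le e^{\max(\sigma-1,0)\,t/z}$, is the one that is uniformly valid and is precisely what the definition of $B_j(\sigma,z)$ in the third line requires; so the final bound is correct while the intermediate display is, strictly speaking, a small slip for $\sigma<1$. Your bookkeeping of the hypotheses ($z>1$ when $j\ge 1$ for the sign of $\log z$; $B_j<1$ for convergence of the geometric integral) is also accurate.
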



We illustrate how to bound the zeroth derivative
of the first of the two infinite series in \eqref{eq:fe}:
\begin{lemma}
Assume that $N, q \ge 1$, $s = \sigma + \tau i$ with $\sigma, \tau \in \RR$, $\delta \in \{0,1\}$.
Define $C = (\sigma + \delta)/2$ and $D = \pi \alpha / q$.
If $D N^2 > C - 1$, then
\begin{equation}
\label{eq:febound}
\begin{aligned}
\left|\sum_{n=N}^{\infty} \frac{\chi(n)}{n^s} \, \Gamma\!\left(\frac{s+\delta}{2}, \frac{\pi n^2 \alpha}{q}\right) \right|
& \le \sum_{n=N}^{\infty} \frac{1}{n^{\sigma}} \frac{(D n^2)^{C-1} e^{-D n^2}}{1-B_0(C, D n^2)} \\
& = \frac{D^{C-1}}{1-B_0(C, D N^2)} \sum_{n=N}^{\infty} \frac{e^{-D n^2}}{n^{2-\delta}} \\
& \le \frac{D^{C-1}}{1-B_0(C, D N^2)} \frac{e^{-D N^2}}{N^{2-\delta} (1-e^{-D})}.
\end{aligned}
\end{equation}
\end{lemma}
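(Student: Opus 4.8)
The plan is to estimate the series term by term, reduce each term to the preceding Lemma, and then dominate the remainder by a geometric series. First I would apply the triangle inequality together with $|\chi(n)| \le 1$ and $|n^{-s}| = n^{-\sigma}$ to get
\[
\left|\sum_{n=N}^{\infty} \frac{\chi(n)}{n^s}\,\Gamma\!\left(\tfrac{s+\delta}{2},\tfrac{\pi n^2\alpha}{q}\right)\right| \le \sum_{n=N}^{\infty} \frac{1}{n^{\sigma}}\,\left|\Gamma\!\left(\tfrac{s+\delta}{2},\, D n^2\right)\right|,
\]
where I abbreviate $\pi n^2\alpha/q = D n^2$ as in the statement. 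For each $n$ I would then invoke the bound \eqref{eq:incgambound} with $j = 0$, parameter $a = (s+\delta)/2$, and argument $z = D n^2 > 0$; since $\Real(a) = (\sigma+\delta)/2 = C$, this gives $|\Gamma((s+\delta)/2,\, D n^2)| \le (Dn^2)^{C-1} e^{-Dn^2}/(1 - B_0(C, Dn^2))$.

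To justify the hypothesis $B_0(C, D n^2) < 1$ for every $n \ge N$, I would note that $B_0(C, z) = \max(C-1,0)/z$ is nonincreasing in $z$, so it suffices that $B_0(C, D N^2) < 1$, which is exactly the stated assumption $D N^2 > C - 1$ (and is automatic when $C \le 1$). Plugging the pointwise bound into the sum gives the first inequality of the lemma. I would then simplify the summand: from $2C = \sigma + \delta$ one gets $n^{-\sigma}(Dn^2)^{C-1} = D^{C-1}\, n^{\delta - 2}$, and, using the monotonicity of $B_0$ once more, I would bound $1 - B_0(C, Dn^2) \ge 1 - B_0(C, D N^2) > 0$ uniformly in $n \ge N$ and pull this factor out of the sum, arriving at the middle expression (an equality when $C \le 1$, where $B_0 \equiv 0$, and otherwise an inequality ``$\le$''). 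For the last step, since $n \ge N \ge 1$ we have $n^{2-\delta} \ge N^{2-\delta}$, while $n^2 - N^2 = (n-N)(n+N) \ge n - N$ together with $D > 0$ gives $e^{-Dn^2} \le e^{-DN^2} e^{-D(n-N)}$, hence $\sum_{n=N}^{\infty} e^{-Dn^2} \le e^{-DN^2}\sum_{k=0}^{\infty} e^{-Dk} = e^{-DN^2}/(1 - e^{-D})$; collecting the estimates yields the final bound.

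I do not anticipate a genuine obstacle, since after \eqref{eq:incgambound} everything is elementary; the care required is purely in the bookkeeping — matching $\Real\big((s+\delta)/2\big) = C$ with the constants of the previous Lemma, checking that $D N^2 > C - 1$ is precisely the needed positivity condition and that it propagates to all $n \ge N$ via the monotonicity of $z \mapsto B_0(C, z)$, and reading the middle ``$=$'' as ``$\le$'' when $C > 1$. The display above treats the zeroth derivative; the analogous estimates for the higher parameter derivatives $\Gamma^{(j,0)}$ and for the second series in \eqref{eq:fe} follow by the same scheme, now keeping the $\log^j(z)$ factor and the full quantity $B_j$ from \eqref{eq:incgambound}.
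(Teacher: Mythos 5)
Your argument is correct and follows the same chain of estimates that the paper's displayed computation is implicitly proving: triangle inequality with $|\chi(n)|\le 1$ and $|n^{-s}|=n^{-\sigma}$, pointwise application of the $j=0$ case of \eqref{eq:incgambound} at $a=(s+\delta)/2$, $z=Dn^2$ (justified for all $n\ge N$ by the monotonicity of $z\mapsto B_0(C,z)$ together with $DN^2>C-1$), the algebraic identity $n^{-\sigma}(Dn^2)^{C-1}=D^{C-1}n^{\delta-2}$, and a geometric-series tail bound via $n^2-N^2\ge n-N$. You are also right that the paper's middle ``$=$'' is really a ``$\le$'' when $C>1$, since then $1-B_0(C,Dn^2)\ge 1-B_0(C,DN^2)$ rather than being constant in $n$; this is a harmless imprecision in the display and does not affect the final bound.
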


We can bound the tails of the $s$-derivatives
as follows: we expand
the power series product $n^{-(s+X)} \Gamma((s+X+\delta)/2, \pi \alpha n^2/q)$ symbolically,
apply the bound \eqref{eq:incgambound} for each coefficient, and
compute geometric series bounds similar to those in \eqref{eq:febound}.

The bound for the other series in \eqref{eq:fe} is identical but
with $C = (1 - \sigma + \delta)/2$ and $D = \pi / (q \alpha)$.

\subsubsection{Implementation remarks}

There is no need to derive a closed formula for choosing $N$;
we can simply evaluate the bound \eqref{eq:febound} for $N = 1, 2, \ldots$ and stop
when the error meets a target tolerance.

For optimal performance, we should compute a tight estimate of the number of bits
that each term contributes to the final result and only compute
to that precision locally.
It is useful to note that for $a \in \RR$ and $z > 0$,
\be
\log(\Gamma(a,z)) \approx \begin{cases} (a-1) \log(z) - z & a < z \\ a (\log(a) - 1) & a \ge z. \end{cases}
\ee
gives an accurate order-of-magnitude
estimate of the incomplete gamma function.




\subsection{Evaluation of the incomplete gamma function}

The idea of the bit-burst algorithm is to analytically
continue a holonomic function $y$ using the Taylor series method for
ODEs, following a path
\be
z_{\text{initial}} \rightsquigarrow z_1 \rightsquigarrow z_2 \rightsquigarrow \ldots
\ee
that approaches the target point $z$ \emph{exponentially} and thus converges
in $O(\log(p))$ steps.
For example, for our application we may choose
$z_1 = \lfloor z / 2^{32}\rfloor 2^{32}$,
$z_2 = \lfloor z / 2^{64}\rfloor 2^{64}$,
$z_3 = \lfloor z / 2^{128}\rfloor 2^{128}$,
$\ldots$, where successive steps double the number
of leading bits extracted from the binary expansion of $z$.\footnote{The initial number of bits
is a tuning parameter; instead of the constant 32,
we may start with $O(\log p)$ bits, for example.}
At each step, the Taylor series can be evaluated using binary splitting,
and the exponentially converging steps balance the bit sizes of the coefficients
against the number of terms in each Taylor series so that the
overall bit complexity is quasilinear in $p$.

\subsubsection{Hypergeometric series}

Let $(a)_n = a (a+1) \cdots (a+n-1)$.
For the first Taylor step $z_{\text{initial}} \rightsquigarrow z_1$,
we may choose $z_{\text{initial}} = 0$. Here we have the hypergeometric series
\be
\Gamma(a,x) = \Gamma(a) - \frac{x^a e^{-x}}{a} \sum_{n=0}^{\infty} \frac{x^n}{(a+1)_n}, \quad x = z_1,
\label{eq:hypser1}
\ee
which
is valid for all $x > 0$ when $a \not \in \{0, -1, -2, \ldots\}$.
If the series is truncated after $N$ terms where $N > -\operatorname{Re}(a) - 1$ and $|a+N+1| > |x|$,
then
\be
\left|\sum_{n=N}^{\infty} \frac{x^n}{(a+1)_n} \right| \le \frac{|x|^N}{|(a+1)_N|} \frac{1}{1-C}, \quad C = \frac{|x|}{|a+N+1|}
\label{eq:ser1bound}
\ee

Truncation bounds for derivatives of this series
with respect to $a$ can
be obtained similarly; see \cite[Theorem 1]{Johansson2019hypergeometric}.

At the poles of the gamma function, a limit computation is needed;
this can be done using power series arithmetic or explicitly using the formula~\cite[8.4.15]{DLMF}
\be
\Gamma(-n,x) = \frac{(-1)^n}{n!} (\psi(n+1) - \log(x)) - x^{-n} \left(\sum_{k=0}^{n-1} + \sum_{k=n+1}^{\infty}\right) \frac{(-x)^k}{k! (k-n)}.
\label{eq:singular}
\ee

When $x = z_1 \approx z$ is sufficiently large, we can also start from
$z_{\text{initial}} = \infty$ and use the asymptotic series
\be
\Gamma(a,x) = x^{a-1} e^{-x} \left[ \sum_{n=0}^{N-1} \frac{(1-a)_n}{(-x)^n} + R_N(a,x) \right]
\label{eq:asympseries}
\ee
for the first step.
We do not need \eqref{eq:asympseries} in the proof of Theorem~\ref{thm:complexity},
but practically speaking it makes a significant difference
for efficiency to choose this expansion whenever
$\min_N |R_N(a,x)|$ is smaller than the target tolerance.
The error term satisfies $|R_N(a,x)| \le |(1-a)_N| / |x|^N$ if $a \in \RR$
and $x > 0$ provided that $N \ge a - 1$. For error bounds with complex $a$,
the formulas in \cite[\S 13.7]{DLMF} may be used.

\subsubsection{Expansions at generic points}

For consecutive Taylor steps $z_{k-1} \rightsquigarrow z_{k}$,
we write the expansion as
\be
\Gamma(a,z_k) = \sum_{n=0}^{\infty} c_n(z_{k-1}) x^n, \quad x = z_k - z_{k-1}.
\label{eq:generictaylor}
\ee
where the coefficients $c_n(z_{k-1})$ need to be determined.
We denote the local
expansion point by $u$ instead of $z_{k-1}$ below to simplify the formulas.

The function $y(z) = \Gamma(a,z)$ satisfies the
second-order
differential equation $z y'' + (z-a+1) y = 0$.
We can translate this differential equation to the point $u$
and convert it to the second-order linear recurrence relation
\be
u (n+1)(n+2) c_{n+2}(u) + (n+1) (n+1+u-a) c_{n+1}(u) + n c_n(u) = 0.
\ee

We can also apply this recurrence to parameter derivatives.
Explicitly, define
\be
c_{j,n}(u) = \frac{\Gamma^{(j,n)}(a,u)}{j! n!}
\ee
so that $\Gamma^{(j,0)}(a,z_k) = \sum_{n=0}^{\infty} c_{j,n}(u) x^n$,
and let $S_n = \sum_{j=0}^J c_{j,n}(u) X^j$ in the ring
of truncated formal power series $\CC[[X]] / \langle X^{J+1} \rangle$.
Then
\be
u (n+1)(n+2) S_{n+2} + (n+1) (n+1+u-a-X) S_{n+1} + n S_n = 0.
\ee

Truncation bounds for the Taylor series \eqref{eq:generictaylor}
and its parameter derivatives
can be obtained using the Cauchy integral formula:
for $n \ge 1$,
\be
c_{j,n}(u) = -\left(\frac{d}{du}\right)^{n-1} \frac{u^{a-1} \log^j(u) e^{-u}}{j! n!} = -\frac{1}{2 \pi i} \frac{1}{j! n} \int_{\gamma} \frac{t^{a-1} \log^j(t) e^{-t}}{(t-u)^n} dt.
\ee

\begin{lemma}
For $a \in \CC$, $u > 0$, $j \ge 0$, $n \ge 1$ and $0 < R < u$, the coefficients $c_{j,n}(u)$ (and $c_n(u) = c_{0,n}(u)$) satisfy
the bound
\be
|c_{j,n}(u)| \le \frac{1}{j! n} \frac{1}{R^{n-1}} M_R(u), \quad M_R(u) = \max_{t : |t-u| = R} |t^{a-1} \log^j(t) e^{-t}|.
\ee
Consequently, for $N \ge 1$ and $|x| < R$, tails of the Taylor series satisfy
\be
\left| \sum_{n=N}^{\infty} c_{j,n}(u) x^n \right| \le \frac{R M_R(u)}{j! N} \frac{C^N}{1 - C}, \quad C = \frac{|x|}{R}.
\ee
\end{lemma}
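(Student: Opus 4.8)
The plan is to read the two bounds straight off the Cauchy integral representation of $c_{j,n}(u)$ already displayed above the statement, using a single application of the trivial (length $\times$ supremum) contour estimate followed by a geometric-series summation.

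First I would fix the contour: take $\gamma$ to be the positively oriented circle $\{t : |t-u| = R\}$. The hypothesis $0 < R < u$ guarantees that the closed disk $\{|t-u| \le R\}$ lies entirely in the right half-plane $\Real(t) > 0$ --- its leftmost point is $u - R > 0$ --- so the integrand $t \mapsto t^{a-1}\log^j(t)\,e^{-t}$ is single-valued and holomorphic on and inside $\gamma$ for every $a \in \CC$; the branch cuts of $t^{a-1}$ and $\log t$ along $(-\infty,0]$ are avoided, and the Cauchy integral formula for the $(n-1)$st derivative applies. This both justifies the displayed identity
\be
c_{j,n}(u) = -\frac{1}{2\pi i}\,\frac{1}{j!\,n}\int_{\gamma}\frac{t^{a-1}\log^j(t)\,e^{-t}}{(t-u)^n}\,dt \qquad (n \ge 1)
\ee
--- the factor $1/n = (n-1)!/n!$ reconciling the normalization $c_{j,n} = \Gamma^{(j,n)}/(j!\,n!)$ with the Cauchy formula, with the case $n = 1$ reducing to the mean value property (where $R^{n-1} = 1$) --- and supplies the bound I want.

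Next I would apply the contour estimate: $\gamma$ has length $2\pi R$, one has $|t-u|^n = R^n$ on $\gamma$, and $|t^{a-1}\log^j(t)\,e^{-t}| \le M_R(u)$ there by the definition of $M_R(u)$. This gives at once $|c_{j,n}(u)| \le \frac{1}{j!\,n}\cdot\frac{1}{2\pi}\cdot 2\pi R\cdot R^{-n}M_R(u) = \frac{1}{j!\,n}\,R^{-(n-1)}M_R(u)$, which is the first inequality; the claim for $c_n(u)$ is the case $j = 0$. For the tail I would substitute this bound into the series, use $1/n \le 1/N$ for $n \ge N$, and sum the geometric series with ratio $C = |x|/R < 1$:
\be
\left|\sum_{n=N}^{\infty} c_{j,n}(u)\,x^n\right| \le \frac{M_R(u)}{j!}\sum_{n=N}^{\infty}\frac{1}{n}\,\frac{|x|^n}{R^{n-1}} \le \frac{R\,M_R(u)}{j!\,N}\sum_{n=N}^{\infty} C^{n} = \frac{R\,M_R(u)}{j!\,N}\,\frac{C^N}{1-C}.
\ee
I do not expect a genuine obstacle: this is a textbook Cauchy-estimate argument. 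The only points needing attention are already built into the hypotheses --- choosing $R < u$ so that the circle stays clear of the branch cut, and keeping the $j!$ and $1/n$ bookkeeping straight --- plus, if one cares about sharpness of the constant, observing that $M_R(u)$ is taken to be the exact maximum on the circle rather than some coarser envelope.
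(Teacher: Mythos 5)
Your argument is correct and is exactly the intended one: the paper displays the Cauchy integral representation for $c_{j,n}(u)$ immediately before stating the lemma and leaves the (ML-estimate plus geometric-series) derivation implicit, which is precisely what you supply. Your bookkeeping of the factor $(n-1)!/(j!\,n!) = 1/(j!\,n)$, the observation that $R < u$ keeps the circle in the right half-plane so the integrand is holomorphic, and the $1/n \le 1/N$ step in the tail sum are all right.
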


To bound $M_R(u)$,
it suffices to bound $t^{a-1} \log^j(t) e^{-t}$ on the disk $|t-u| \le R$
using naive upper bounds for the elementary functions,
or using interval arithmetic.
A simple algorithm to choose $R$ is to start with $R = (u+|x|)/2$
and iterate $R \gets R / 2$ as long as this decreases the bound.
The results can be improved slightly with a proper numerical minimization algorithm.






\subsubsection{Overall algorithm}

The final step is to rewrite the series expansions
in matrix form and evaluate the matrix products
using binary splitting.
We sketch the complete algorithm to compute the incomplete gamma function.

We write $x + [\pm \varepsilon]$ to express
the use of an enclosure with midpoint $x$ and radius $\varepsilon$
to represent an exact quantity.

\begin{algorithm}[Bit-burst evaluation of $\Gamma(a,z)$, with $z > 0$]{\ \\} 
\label{alg:bsplit1}
\renewcommand*{\arraystretch}{1.2}
\vspace{-0.8em}
\begin{itemize}
\item Choose initial number of bits $b_1 = 32$ and let $x = z_1 = 2^{b_1} \lfloor z / 2^{b_1} \rfloor$.
\item If the asymptotic series \eqref{eq:asympseries} is accurate enough:
\begin{itemize}
\item Choose $N$ and denote by $\varepsilon$ a bound for the remainder term in \eqref{eq:asympseries}.
\item Compute $P = U_{N-1} \cdots U_1 U_0$ using binary splitting, where $$U_n = \begin{pmatrix} \frac{a-n-1}{x} & 0 \\ 1 & 1 \end{pmatrix}.$$
\item Now $P_{2,1} = \sum_{n=0}^{N-1} \frac{(1-a)_n}{(-x)^n}$.
      Compute $y_1 = x^{a-1} e^{-x} (P_{2,1} + [\pm \varepsilon])$, which equals $\Gamma(a, z_1)$.
\end{itemize}
\item Otherwise:
\begin{itemize}
\item Choose $N$ and denote by $\varepsilon$ a bound for the remainder term in \eqref{eq:hypser1}.
\item Compute $P = U_{N-1} \cdots U_1 U_0$ using binary splitting, where $$U_n = \begin{pmatrix} \frac{x}{a+n+1} & 0 \\ 1 & 1 \end{pmatrix}.$$
\item Now $P_{2,1} = \sum_{n=0}^{N-1} \frac{x^n}{(a+1)_n}$.
      Compute $y_1 = \Gamma(a) - x^a e^{-x} (P_{2,1} + [\pm \varepsilon]) / a$, which equals $\Gamma(a, z_1)$.
\item (At a pole of the gamma function, perform the formal limit computation in the above steps.)
\end{itemize}
\item For $k = 2, 3, \ldots$ with $b_k = 2 b_1$, perform the following:
\begin{itemize}
\item Let $z_k = 2^{b_k} \lfloor z / 2^{b_k} \rfloor$. If this approximates $z$ to within the target precision, set $z_k = z$ instead and make this the last iteration.
\item Compute $x = z_k - z_{k-1}$.
\item Choose $N$ and denote by $\varepsilon$ a bound for the remainder term in \eqref{eq:generictaylor}.
\item Compute $P = U_{N-1} \cdots U_1 U_0$ using binary splitting, where
$$U_n = \begin{pmatrix}
0 & x & 0 \\
\frac{x n}{Q} & \frac{x (n+1) (n+1+z_{k-1}-a)}{Q} & 0 \\
1 & 0 & 1
\end{pmatrix}, \quad Q = -z_{k-1} (n+1) (n+2).$$
\item Compute $y'_{k-1} = -z_{k-1}^{a-1} e^{-z_{k-1}}$.
\item Compute $y_k = P_{3,1} y_{k-1} + P_{3,2} y'_{k-1} + [\pm \varepsilon]$, which equals $\Gamma(a,z_k)$.
\end{itemize}
\item Return $y_k$, which equals $\Gamma(a,z)$.
\end{itemize}
\end{algorithm}

As noted previously, derivatives up to order $j$
with respect to $a$
can be computed using the same algorithm by substituting $a \to a + X$ and
working in $\CC[[X]] / \langle X^{j+1} \rangle$.


Let us elaborate on the technical details in the proof of
Theorem~\ref{thm:complexity}.
When computing the values $\Gamma((s+\delta)/2, z)$ and $\Gamma((1-s+\delta)/2, z)$
used in the approximate functional equation,
the recurrence matrices in Algorithm~\ref{alg:bsplit1}
will have entries in the number field $\mathbb{Q}(s)$,
or in the power series ring $R = \mathbb{Q}(s)[[X]] / \langle X^{j+1} \rangle$
if we compute derivatives.
When both $j$ and the degree of $s$ are fixed, this ring is a finite-dimensional vector space over $\mathbb{Q}$,
and if the minimal polynomial of $s$ has height $h$,
the product or sum of $N$ entries in $R$ with $b$-bit coefficients
will have coefficients with $O(N (b + \log h))$-bit numerators and denominators.

Summing over all bit sizes in the recursion trees
for the binary splitting and the consecutive bit-burst steps
and using the 
bound $b^{1+o(1)}$ for the bit complexity of arithmetic on $b$-bit rational numbers,
we obtain
the $p^{1+o(1)}$ complexity bound for each call to Algorithm~\ref{alg:bsplit1}.

\subsubsection{Implementarion remarks}

In practice, we should clear denominators so that the matrices
have integral entries in the binary splitting products.
The products should then be computed using with truncation (rounding)
to reduce memory usage and improve performance~\cite{mezzarobba2012note}.
Further constant-factor savings are possible by eliminating
various redundant computations in the
binary splitting process.

The gamma function $\Gamma(a)$ with algebraic $a$ can be computed in quasilinear
time by evaluating $\Gamma(a,N)$ with a sufficiently large $N$
using binary splitting~\cite{Brent1976,johansson2021arbitrary}.
In any case, this only needs to be done once: the same $\Gamma(a)$
value can be recycled for all evaluations of $\Gamma(a,z)$.

When using \eqref{eq:hypser1}, there can be significant
cancellation between the gamma function and the series.
This does not affect the \emph{absolute} error when $s$
is small, but when $s$ is large, we need to increase the working
precision to compensate. The precision increases with $z$
up to the point where we can switch to the asymptotic series
\eqref{eq:asympseries}.

In the pseudocode for Algorithm~\ref{alg:bsplit1}, we evaluate the first derivative
$y'(z) = -z^{a-1} e^{-z}$ in each Taylor step. There are several ways
to do this: we can compute the elementary functions from scratch,
we can perform bit-burst analytic continuation
of the function $y'(z)$, or we can perform bit-burst evaluation
of the factors $z^{a-1}$ and $e^{-z}$ separately
using the standard binomial and exponential Taylor series.
Which method performs better may depend on several factors,
but either approach achieves quasilinear complexity.

In the approximate functional equation, we need to evaluate $\Gamma(a,z)$
for successive values $z = C n^2$.
It is tempting to reuse the computed $\Gamma(a,z)$ values,
starting the bit-burst evaluation at $z_{\text{initial}} = C (n-1)^2$.
However, this appears to be a net slowdown,
the main reason being that the recurrence matrices
are much simpler for the hypergeometric series at the origin
than for the expansions at generic points.

\section{Implementation results}

\label{sect:implementation}

We have implemented the
algorithm for $L(s,\chi)$ with $s \in \QQ$ in Arb~\cite{Joh2017}.
We leave an implementation for $s \in \overline{\QQ}$ and for $L^{(j)}(s,\chi)$ for future work.

\subsection{Fixed rational points}

\begin{table}
\setlength{\tabcolsep}{3pt}
\renewcommand{\arraystretch}{1.02}
\centering
\caption{\small
Time in seconds to compute values of $L$-functions at fixed simple rational points, using Euler-Maclaurin summation (EM) and the approximate functional equation (AFE).}
\label{tab:timeconst}
\small
\begin{tabular}{c | l l l | l l l}
 Digits                     & Number       & EM        & AFE     & Number & EM & AFE \\ \hline
 $10^3$                     & $\zeta(1/2)$ & 0.0076    & 0.037   &  $L(1/2, \chi_{23.19})$      & 0.15      & 0.18 \\
 $\lfloor 10^{3.5} \rfloor$ &              & 0.19      & 0.29    &        & 2.3       & 1.5  \\
 $10^4$                     &              & 2.7       & 2.7        &        & 38        & 14   \\
 $\lfloor 10^{4.5} \rfloor$ &              & 52        & 27         &        & 621       & 131    \\
 $10^{5}$                   &              & 887       & 262        &        &    & 1282   \\
 $\lfloor 10^{5.5} \rfloor$ &              &           & 2175       &        &    &    \\
 $10^{6}$                    &              &          & 17004     &        &    &    \\ \hline
 $10^3$                      & $\zeta(4/3)$ & 0.014    & 0.083 &  $L(4/3, \chi_{23.19})$  & 1.5       & 0.38 \\
 $\lfloor 10^{3.5} \rfloor$  & & 0.34      & 0.68      &     & 39        & 3.5  \\
 $10^4$                      & & 6.0       & 7.2       &  & 795       & 30   \\
 $\lfloor 10^{4.5} \rfloor$  & & 100       & 66        &      &           & 295    \\
 $10^{5}$                    & & 1808      & 618       &       &           & 2821   \\
 $\lfloor 10^{5.5} \rfloor$  & &           & 5242      &                     \\
\end{tabular}
\end{table}

Table~\ref{tab:timeconst} illustrates the precision-dependent scaling
of the implementation of the approximate functional equation (AFE) when $s$ is a fixed simple fraction.
We also show timings for the Euler-Maclaurin implementation
of Dirichlet $L$-functions in Arb (EM).\footnote{The benchmarks were run on a 1.90 GHz Intel i5-4300U CPU.}

We observe that the AFE
is competitive from about $10^4$ digits for computing the Riemann zeta function.
The advantage is greater for $L$-functions with
larger modulus $q$ due to the $O(q^{1/2})$ scaling.

At high enough precision, the subquadratic asymptotic complexity
of the AFE is evident
since the measured time increases by (barely) less than a factor 10
when the precision is multiplied by $10^{1/2}$.
Asymptotically for a $p^{3/2+o(1)}$ complexity algorithm,
the time should only increase by
a factor $10^{3/4} \approx 5.6$,
but the tested precisions are small enough
for the hidden logarithmic factors in the complexity bounds
to influence the running time.
The timings for the $p^{2+o(1)}$ EM
algorithm also increase by factors somewhat larger than 10
for the same reason.

There is roughly a factor two slowdown with both algorithms
going from $\zeta(1/2)$ to $\zeta(4/3)$, for different reasons: the AFE is inherently twice
as fast for $\zeta(1/2)$ since only one of the two series has to be computed;
the slowdown with EM is an implementation artifact
(the Arb code is not optimized for rational powers).

The million-digit computation of $\zeta(1/2)$, which takes less than five hours
on a single core and requires negligible memory,
appears to be a precision record for a zeta constant not at an integer.\footnote{In 2013, the author computed the
first nontrivial zero $\tfrac{1}{2} + 14.134\ldots i$ of $\zeta(s)$ to
303,000 digits using Euler-Maclaurin summation. This took 20 hours and used 62 GB of memory,
the high memory usage being the main obstacle to reaching higher precision~\cite{Johansson2014hurwitz} (this
is an implementation problem that can be avoided).}


\subsection{Computation of Bernoulli numbers}

\begin{table}
\setlength{\tabcolsep}{10pt}
\renewcommand{\arraystretch}{1.08}
\centering
\caption{\small
Time in seconds to compute the Bernoulli number $B_n$ and Euler number $E_n$ using Harvey's multimodular algorithm (MM), the Euler product (EP), and the approximate
functional equation (AFE). Timings marked * were estimated
based on the time to evaluate a sparse subsequence ($1/10$ or $1/100$) of the terms,
giving an accurate estimate of the time for the full computation without performing it. We indicate the number of digits in
the numerator of $B_n$ and in $E_n$.}
\label{tab:timebernoulli}
\small
\begin{tabular}{l c l l l l}
Number   & $n$                           & Digits & MM       & EP            & AFE \\ \hline
$B_n$    &  $10^3$                       & 1779          & 0.0066   & 0.00010       & 0.067    \\
         &  $\lfloor 10^{3.5} \rfloor$   & 7180          & 0.025    & 0.0011        & 0.83     \\
         &  $10^4$                       & 27691        & 0.10     & 0.012         & 11       \\
         &  $\lfloor 10^{4.5} \rfloor$   & 103330        & 0.47     & 0.18          & 142      \\
         &  $10^{5}$                     & 376772       & 2.7      & 1.9           & 1707     \\
         &  $\lceil 10^{5.5} \rceil$     & 1349518       & 22       & 21            & 16578    \\
         &  $10^{6}$                     & 4767554                & 222      & 224           & 159945*  \\
         &  $\lceil 10^{6.5} \rceil$     & 16657389                & 2329     & 2567          & 1587800* \\ \hline
$E_n$    &  $10^3$                       & 2372       &          & 0.00026       & 0.19     \\
         &  $\lfloor 10^{3.5} \rfloor$   & 9076       &          & 0.0026        & 2.0      \\
         &  $10^4$                       & 33699       &          & 0.033         & 24       \\
         &  $\lfloor 10^{4.5} \rfloor$   & 122367       &          & 0.49          & 293      \\
         &  $10^{5}$                     & 436962       &          & 5.9           & 2874     \\
         &  $\lceil 10^{5.5} \rceil$     & 1539903       &          & 68            &          \\
         &  $10^{6}$                     & 5369590  &          & 726           &          \\
\end{tabular}
\end{table}

Table~\ref{tab:timebernoulli}
compares three algorithms to compute $B_n$ as an exact fraction:
\begin{itemize}
\item MM: Harvey's implementation of his $n^{2+o(1)}$ multimodular algorithm~\cite{Harvey2010}
(available in the \texttt{bernmm} module in SageMath).
\item EP: the classical $n^{2+o(1)}$ zeta function algorithm
using the Euler product implemented in Arb.
\item AFE: the approximate functional equation implemented in Arb.
\end{itemize}

The last two implementations also support computing Euler numbers.

The MM and EP algorithms both scale
superquadratically with $n$, the multimodular algorithm having a slight edge for $n$ larger
than $10^4$.
This confirms the observations in \cite[Table~1]{Harvey2010}.
The AFE appears to scale weakly subquadratically, but for reasonably sized $n$,
it is roughly $10^3$ times slower than the Euler product.
To explain this gap we need to consider the logarithmic
and constant factor overheads that we have neglected
in the complexity analysis so far.

An analysis with Stirling's formula shows that the cutoff in the Euler product for computing $\zeta(n)$ to $\log_2 |B_n|$ bits of accuracy is $N \approx n / (2 \pi e)$,
and there are about $N / \log N$ primes up to this cutoff.
A similar analysis for the AFE gives the cutoff $N \approx n^{1/2} (2 \pi)^{-1/2} \log(n/(2 e))$, where all terms are needed, and there are two such series to compute.
Considering these facts alone, the AFE thus saves at most a factor $n^{1/2} / (2 e \sqrt{2 \pi} \log^2 n) \approx n^{1/2} / (13.6 \log^2 n)$ asymptotically,
which means we need $n \approx 10^7$ to break even \emph{assuming that the terms have unit cost}.

The last assumption is obviously false: series of matrix products are roughly $O(\log^2 n)$
slower than integer powers. An asymptotic speedup of $n^{1/2} / (C \log^4 n)$, $C > 10^1$,
is consistent with the observed three-orders-of-magnitude slowdown for $n \approx 10^6$
and suggests that we may need $n$ larger than $10^{15}$ for the AFE to win, though
there is too much uncertainty to extrapolate reliably.

An interesting question is whether Harvey's subquadratic algorithm for Bernoulli numbers~\cite{Harvey2014}
can perform better, but unfortunately no implementation exists.


\section{Evaluation for non-algebraic $s$}

\label{sect:transcendental}

If $s$ is not algebraic and instead must be represented by a $p$-bit
floating-point approximation, then the bit-burst algorithm does not
offer any improvement over naive series evaluation since the recurrence
matrices will not have small entries,
and we only obtain a $p^{5/2+o(1)}$ algorithm to compute $L(s,\chi)$ via \eqref{eq:fe}.
However, there at least four independent ways to reduce the complexity to $p^{2+o(1)}$:

\begin{enumerate}
\item We evaluate each $\Gamma(a,z)$ via \eqref{eq:hypser1} (optionally together with \eqref{eq:asympseries})
using a baby-step giant technique, exploiting the hypergeometric structure
of the terms: if the truncated series
is represented as a matrix product $\prod_{n=0}^{N-1} U_n$ of length $N = m^2$,
we expand $\prod_{n=0}^{m-1} U_{x+n}$ as a matrix of rational functions in $x$
and evaluate at $m$ points using fast multipoint evaluation~\cite{Borwein1987}.
This achieves $p^{3/2+o(1)}$ complexity for each incomplete gamma function.
This is the approach described in~\cite{BorweinBradleyCrandall2000}.
\item We expand a truncation of the series \eqref{eq:hypser1} (optionally together with \eqref{eq:asympseries})
as a polynomial in $z$ of degree $p^{1+o(1)}$ and evaluate it simultaneously at the requisite
$p^{1/2+o(1)}$ values of $z$ using fast multipoint evaluation.
\item Instead of using the series in incomplete gamma function, we
use the integral form \eqref{eq:fetheta} and its analog for Dirichlet $L$-functions.
Using a standard numerical integration method with geometric
rate of convergence for analytic functions, for example Gaussian,
Clenshaw-Curtis or double exponential quadrature, we need $p^{1+o(1)}$
evaluations of the integrand.
Evaluating the theta function in the integrand using the $q$-series costs $p^{1/2+o(1)}$ multiplications,
resulting in an $p^{5/2+o(1)}$
algorithm for $L(s,\chi)$. This is the method used by Keiper~\cite{Keiper1992power}. However, we can compute the theta function in
quasilinear time using arithmetic-geometric mean iteration instead~\cite{labrande2018computing}, and this achieves $p^{2+o(1)}$ complexity for $L(s,\chi)$.
\item As above, but we expand the truncated $q$-series for the theta
function as a polynomial and evaluate it at all the integration
nodes using fast multipoint evaluation.
\end{enumerate}

The techniques for fast evaluation of theta functions used in methods (3) and (4)
have previously been used in the context of computing class polynomials
via numerical approximations of the roots~\cite{enge2009complexity}.

We also mention a version of method (2) that is
asymptotically slower but may be superior at realistic levels of precision.
We can expand a truncation of the series \eqref{eq:hypser1} (and optionally \eqref{eq:asympseries})
for $\Gamma(a, \pi n^2 \alpha / q)$ as a polynomial in $n^2$.
This reduces the computation to multipoint evaluation at
the small integers $n = 1, 2, \ldots$,
which may be performed using repeated applications of
Horner's rule instead of fast multipoint evaluation.
This results in a $p^{5/2+o(1)}$ algorithm
but with very little overhead since the Horner evaluations only
involve additions and $p$-by-1-word multiplications
which are orders of magnitude
cheaper than full $p$-by-$p$ multiplications.

Yet another option is the Booker-Molin method, which employs a Fourier series
that can be precomputed for efficient
evaluation at many values of $s$~\cite[\S 9.4]{belabas2021numerical}.

It is not clear \emph{a priori} which of the above methods will perform better,
so further implementation studies are needed.

Did Borwein and Borwein~\cite{Borwein1988}
have one of the methods above in mind for non-algebraic $s$, or did
they have an entirely different algorithm?
It is likely that they considered (1) or (2) since their paper discusses the
same multipoint evaluation techniques for other functions,
although their wording
is more suggestive of an algorithm
along the lines of (3) or (4).
Either way, there seems to be no obvious way to
obtain a subquadratic algorithm for $\zeta(s)$
or $L(s,\chi)$ for non-algebraic $s$,
and this remains an open problem
along with the problem of finding a quasilinear
algorithm for $s \in \overline{\mathbb{Q}} \setminus \ZZ$.

\section*{Acknowledgements}

The author was supported in part by the ANR grant ANR-20-CE48-0014-02 NuSCAP.

\bibliographystyle{alpha}
\bibliography{references}

\end{document}